\newtheorem{theorem}{Theorem}[section]
\newtheorem{prop/def}[theorem]{Proposition/Definition}
\newtheorem{corollary}[theorem]{Corollary}
\newtheorem{lemma}[theorem]{Lemma}
\theoremstyle{definition}
\newtheorem*{example}{Example}
\newtheorem*{remark}{Remark}
\newtheorem*{conclusion}{Conclusion}
\DeclareMathOperator{\End}{\mathrm{End}}
\DeclareMathOperator{\Gal}{\mathrm{Gal}}
\DeclareMathOperator{\Hom}{\mathrm{Hom}}
\DeclareMathOperator{\Qalg}{\mathbb{Q}-\mathrm{alg}}
\DeclareMathOperator{\ord}{\mathrm{ord}}
\DeclareMathOperator{\Lie}{\mathrm{Lie}}
\begin{document}
\title{A Deuring criterion for abelian varieties}
\author{Chris Blake}

\address{ D.P.M.M.S. \\ 
   Cambridge University \\
   Cambridge \\
  CB3 0WA \\
   U.K..}
\email{cb529@cam.ac.uk}


\begin{abstract}
We give several generalisations of the Deuring reduction criterion for elliptic curves to abelian varieties of higher dimension. In particular the Newton polygon of the reduction of an abelian variety \(A\) with complex multiplication by \(F\) at a prime above \(p\) can be explicitly computed in terms of the CM type of \(A\) and the decomposition group of a suitable prime above \(p\) in a Galois closure of \(F\). Conversely, it is shown that the formal isogeny type of the reduction of \(A\) at a prime above \(p\) places certain restrictions on the decomposition of \(p\) in \(F\). 
\end{abstract}

\maketitle

\section{Introduction} 

\let\thefootnote\relax\footnotetext{2000 \emph{Mathematics Subject Classification} 11G10, 11G15}

\let\thefootnote\relax\footnotetext{This research was supported by the Engineering and Physical Sciences Research Council}

In this paper we prove several generalisations of the Deuring reduction criterion for elliptic curves to abelian varieties of higher dimension. We fix an algebraic closure \(\overline{\mathbb{Q}}\) of \(\mathbb{Q}\) and let \(E \subset \overline{\mathbb{Q}}\) be a number field.  Let \(A / E \) be an abelian variety of dimension \(g\) with complex multiplication (CM) by a second number field \(F\) - i.e. \([F : \mathbb{Q}] = 2g\) and we have a  given homomorphism \(\iota: F \hookrightarrow \mathbb{Q} \otimes_{\mathbb{Z}} \End(A) \) (in particular these endomorphisms are defined over \(E\)). Then \(F\) is  necessarily a CM field, i.e a totally imaginary quadratic extension of a totally real field \(F^+\), which we can view as the fixed field of conjugation \(c \in \Gal(F/F^+)\).

If \(p\) is a prime number and \(\mathcal{P}\) is a prime of \(E\) above \(p\) at which \(A\) has good reduction we write \(A_{\mathcal{P}}\) for the reduction of \(A\) at \(\mathcal{P}\) - this is an abelian variety of dimension \(g\) over the finite field \(k = \mathcal{O}_E / \mathcal{P}\). When \(A\) is an elliptic curve the reduction \(A_{\mathcal{P}}\) can be either ordinary or supersingular, and the Deuring reduction criterion (c.f. \cite{Deuring}) relates the decomposition of \(p\) in \(F\) to the reduction type of \(A\) at \(\mathcal{P}\).

\begin{theorem}[Deuring] Let \(A/ E\) be an elliptic curve with complex multiplication by an imaginary quadratic field \(F\). Let \({\mathcal{P}}\) be any prime of \(E\) above \(p\) at which \(A\) has good reduction. Then \(A\) has supersingular reduction at \({\mathcal{P}}\) if and only if \(p\) is inert or ramified in \(F\), and ordinary reduction at \({\mathcal{P}}\) if and only if \(p\) is split in \(F\). 
 \end{theorem}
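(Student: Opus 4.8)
The plan is to transport the complex multiplication on $A$ down to its reduction and then read off the reduction type from the structure of the endomorphism algebra $\End^0(A_{\mathcal{P}}) := \mathbb{Q}\otimes_{\mathbb{Z}}\End(A_{\mathcal{P}})$. First I would use that, since $A$ has good reduction at $\mathcal{P}$, the reduction map on endomorphism rings $\End(A)\to\End(A_{\mathcal{P}})$ preserves degrees and is therefore injective (a nonzero endomorphism reduces to one of the same nonzero degree). Tensoring with $\mathbb{Q}$ yields an embedding of $\mathbb{Q}$-algebras $F=\End^0(A)\hookrightarrow\End^0(A_{\mathcal{P}})$, so in particular $\End^0(A_{\mathcal{P}})$ contains the imaginary quadratic field $F$.

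Next I would invoke the classification of endomorphism algebras of elliptic curves over a finite field $k$ of characteristic $p$ (due to Deuring, later subsumed by Tate): $A_{\mathcal{P}}$ is ordinary exactly when $\End^0(A_{\mathcal{P}})$ is an imaginary quadratic field, and supersingular exactly when $\End^0(A_{\mathcal{P}})$ is the quaternion algebra $B_{p,\infty}$ over $\mathbb{Q}$ ramified precisely at $p$ and $\infty$. In the ordinary case the embedding of the previous step together with a dimension count forces $\End^0(A_{\mathcal{P}})=F$; in the supersingular case it gives an embedding $F\hookrightarrow B_{p,\infty}$. Thus the reduction type is entirely encoded in which of these two algebras receives $F$, and the remaining task is to match each possibility with the splitting of $p$.

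For the supersingular implication I would apply the classical criterion for embedding a quadratic field into a quaternion algebra: $F$ embeds into $B_{p,\infty}$ if and only if no place ramified in $B_{p,\infty}$ splits in $F$. Since $F$ is imaginary, the archimedean place $\infty$ is automatically non-split, so the criterion collapses to the single condition that $p$ be non-split in $F$, i.e.\ inert or ramified. Hence supersingular reduction forces $p$ to be inert or ramified in $F$.

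It remains to handle the ordinary case, which I expect to be the main obstacle, since it requires a careful valuation bookkeeping rather than a citation. Here $\End^0(A_{\mathcal{P}})=F$ is commutative, so the $q$-power Frobenius $\pi$ (with $q=|k|$) lies in $\mathcal{O}_F$ and satisfies $\pi\bar\pi=q$, a power of $p$. Ordinariness says the Newton polygon of $A_{\mathcal{P}}$ has distinct slopes $0$ and $1$, equivalently $v_{\mathfrak{p}}(\pi)\neq v_{\mathfrak{p}}(\bar\pi)$ for a normalized valuation at some prime $\mathfrak{p}\mid p$. Using that complex conjugation permutes the primes above $p$ via $v_{\mathfrak{p}}(\bar\pi)=v_{\bar{\mathfrak{p}}}(\pi)$, I would observe that if $p$ were inert or ramified then $\bar{\mathfrak{p}}=\mathfrak{p}$, giving $v_{\mathfrak{p}}(\pi)=v_{\mathfrak{p}}(\bar\pi)$ and hence both slopes equal to $\tfrac12$, contradicting ordinariness; therefore $\bar{\mathfrak{p}}\neq\mathfrak{p}$ and $p$ splits. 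Finally, since $\{$ordinary, supersingular$\}$ and $\{$split, non-split$\}$ are each exhaustive dichotomies, the two implications just established upgrade to the asserted biconditionals, completing the proof.
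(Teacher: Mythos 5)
Your proof is correct, but it takes Deuring's original route rather than the paper's. The paper never argues through the endomorphism algebra of the reduction: it recovers Theorem 1.1 as the one-dimensional special case of its general machinery (\S2--\S4), namely the decomposition \(\mathcal{G} \sim \prod_{w \mid p} \mathcal{G}_w\) of the \(p\)-divisible group of the N\'eron model along \(\mathbb{Q}_p \otimes_{\mathbb{Q}} F \simeq \prod_{w \mid p} F_w\), with each slope computed via the Shimura--Taniyama formula as \(\lambda_w = \ord_w(\pi)/\ord_w(q) = |\Phi_w|/[F_w : \mathbb{Q}_p]\). For \(g = 1\) this settles both directions at once: if \(p\) is non-split there is a single place \(w\) with \([F_w : \mathbb{Q}_p] = 2\), forced to have slope \(1/2\) (supersingular); if \(p\) splits, the two places each have \([F_w : \mathbb{Q}_p] = 1\) and complementary slopes \(0\) and \(1\) (ordinary). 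Your ordinary-direction argument --- valuations of \(\pi\) and \(\bar\pi\) with \(\pi\bar\pi = q\), and conjugation fixing the unique prime above \(p\) in the non-split case --- is the same computation in different clothing; it is essentially Lemma 4.1/Theorem 4.5 of the paper specialized to \(g=1\). Your supersingular direction, however, via the finite-field classification of \(\End^0\) and the criterion for embedding a quadratic field into the quaternion algebra \(B_{p,\infty}\), is genuinely different from anything in the paper. What your route buys is an elementary, largely citation-based proof tailored to elliptic curves; what the paper's route buys is uniformity in the dimension, which is exactly what its generalizations (Theorems 1.2 and 1.3) need --- the quaternion-embedding argument has no direct analogue when \(A_{\mathcal{P}}\) has higher dimension.

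One point you should tighten: the dichotomy ``ordinary \(\iff \End^0\) imaginary quadratic, supersingular \(\iff \End^0 = B_{p,\infty}\)'' is valid for the geometric endomorphism algebra \(\End^0(A_{\mathcal{P}} \times_k \bar{k})\), not over the residue field \(k\) itself: a supersingular elliptic curve over \(\mathbb{F}_p\) typically has \(k\)-endomorphism algebra \(\mathbb{Q}(\sqrt{-p})\), which is commutative. This does not break your argument, because the reduction type is insensitive to base extension and your embedding \(F \hookrightarrow \mathbb{Q} \otimes_{\mathbb{Z}}\End(A_{\mathcal{P}})\) composes with the inclusion of \(k\)-endomorphisms into \(\bar{k}\)-endomorphisms, yielding \(F \hookrightarrow B_{p,\infty}\) in the supersingular case exactly as you want; but as written the appeal to the classification over \(k\) is inaccurate and should be routed through \(\bar{k}\).
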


In higher dimensions the situation is altogether more complicated. In this paper, the appropriate generalisation of the reduction type of an elliptic curve is the Newton polygon (i.e. the formal isogeny type) of the reduced abelian variety \(A_{\mathcal{P}}\), which is said to be supersingular (respectively ordinary) if every slope of \(A_{\mathcal{P}}\) is equal to \(1/2\) (respectively either 0 or 1). Some results generalising the Deuring criterion to this setting have been obtained by Sugiyama \cite{Sugiyama}. For example, if \(p\) is unramified in \(F\) and every prime of \(F^+\) above \(p\) is inert in \(F\) then \(A_{\mathcal{P}}\) must be supersingular. Similarly, if \(p\) splits completely in \(F\) then \(A_{\mathcal{P}}\) must be ordinary. However, as the following example shows, the relationship between the decomposition of \(p\) in \(F\) and the the formal isogeny type of \(A_{\mathcal{P}}\) is not as simple as that described by Theorem 1.1,  even for abelian surfaces:

\begin{example} Let \(F = \mathbb{Q}(\zeta_8)\) and \(\Phi \subset \Hom_{\Qalg}(F, \overline{\mathbb{Q}})\) be any CM type which is induced from the imaginary quadratic subfield \(\mathbb{Q}(i)\).  Then the abelian variety \(A / \overline{\mathbb{Q}}\) with type \(\Phi\) is isogenous to \(A_0 \times A_0\), where \(A_0\) is an elliptic curve with complex multiplication by \(\mathbb{Q}(i)\). It follows from the classical Deuring criterion that the reduction of \(A\) at any prime \(\overline{\mathcal{P}}\) of \(\overline{\mathbb{Q}}\) above \(p\) is supersingular if \(p =2\) or \(p = 3\) (mod 4) and ordinary if \(p =1\) (mod 4). On the other hand if \(p = 3,5 \) (mod 8) then \(p\) decomposes as  \(p\mathcal{O}_F = \mathfrak{P} \mathfrak{P}^c\) in \(F\).  We conclude:
\begin{itemize}
\item If \(p = 3\) (mod 8) then \(p\mathcal{O}_F = \mathfrak{P} \mathfrak{P}^c\) and \(A\) has supersingular reduction at \(\overline{\mathcal{P}}\).
\item If \(p = 5\) (mod 8) then \(p\mathcal{O}_F = \mathfrak{P} \mathfrak{P}^c\) and \(A\) has ordinary reduction at \(\overline{\mathcal{P}}\).
\end{itemize}

 \end{example}

 In particular we note that \(A\) can have supersingular reduction at \(\mathcal{P}\) even if every prime of \(F^+\) above \(p\) splits in \(F\). Nonetheless it is possible to generalise Theorem 1.1 to higher dimensions. To ease notation let us now fix an embedding of \(F\) into \(\overline{\mathbb{Q}}\). If we let \(\widetilde{F}\) denote the Galois closure of \(F\) in \(\overline{\mathbb{Q}}\) then we can identify the CM type \(\Phi\) of \(A\), which is strictly speaking a subset of \(\Hom_{\Qalg}(F, \overline{\mathbb{Q}})\), with a subset of \(\Hom_{\Qalg}(F,  \widetilde{F})\). We will write \(G\) for \( \Gal( \widetilde{F}/ \mathbb{Q})\), \(H\) for \( \Gal( \widetilde{F}/F)\)
and \( \widetilde{\Phi}\) for the set of all lifts to \( \widetilde{F}\) of elements in \(\Phi\), i.e. \( \widetilde{\Phi} = \{g \in G : g|_F \in \Phi \}  \). Our first main result (c.f. \S3) asserts that the formal isogeny type of the reduction of \(A\) at a prime of good reduction can be computed in terms of purely group theoretic data as follows:

\begin{theorem} Let \(\mathcal{P}\) be a prime of \(E\) at which \(A\) has good reduction \(A_{\mathcal{P}}\).  Pick any prime \(\overline{\mathcal{P}}\) of \(\overline{\mathbb{Q}}\) above \(\mathcal{P}\) and let \( \widetilde{\mathfrak{P}}\) be the prime of \( \widetilde{F}\) below \(\overline{\mathcal{P}}\) and \(D = D_ { \widetilde{\mathfrak{P}}} \subset G\) be the decomposition group of \( \widetilde{\mathfrak{P}}\). Then the Newton polygon of \(A_{\mathcal{P}}\) has slopes \(\lambda_g = {|DgH \cap  \widetilde{\Phi}|}/{|DgH|}\) appearing with multiplicity \({|DgH|}/{|H|}\) as \(g\) ranges through a set of  representatives for double cosets \(D \setminus G  \; / H\). 
\end{theorem}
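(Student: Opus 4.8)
The plan is to deduce the statement from the Shimura--Taniyama formula and then to repackage the resulting local data in purely group-theoretic terms. Recall that the Shimura--Taniyama formula computes the Newton polygon of the reduction of a CM abelian variety directly from its CM type: writing \(\bar{A}=A_{\mathcal{P}}\), the \(p\)-divisible group \(\bar{A}[p^\infty]\) decomposes according to the primes of \(F\) above \(p\) as \(\prod_{\mathfrak{q}\mid p}\bar{A}[\mathfrak{q}^\infty]\), each factor \(\bar{A}[\mathfrak{q}^\infty]\) being isoclinic of height \([F_{\mathfrak{q}}:\mathbb{Q}_p]\) and slope \(\lambda_{\mathfrak{q}}=|\Phi_{\mathfrak{q}}|/[F_{\mathfrak{q}}:\mathbb{Q}_p]\), where \(\Phi_{\mathfrak{q}}\subseteq\Phi\) is the set of embeddings inducing \(\mathfrak{q}\) under the fixed place of \(\overline{\mathbb{Q}}\) determined by \(\overline{\mathcal{P}}\). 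Thus the Newton polygon of \(A_{\mathcal{P}}\) is the multiset of slopes \(\lambda_{\mathfrak{q}}\), each occurring with horizontal multiplicity \([F_{\mathfrak{q}}:\mathbb{Q}_p]\), as \(\mathfrak{q}\) ranges over the primes of \(F\) above \(p\). It therefore suffices to match this data against the double cosets \(D\setminus G/H\).

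To do so I would first fix the standard dictionary between embeddings and group elements: restriction \(g\mapsto g|_F\) identifies \(\Hom_{\Qalg}(F,\widetilde{F})\) with the coset space \(G/H\), and under this identification \(\widetilde{\Phi}\) is exactly the preimage of \(\Phi\), a union of left cosets \(gH\). The key local computation is to determine which prime \(\mathfrak{q}\) of \(F\) a given embedding induces. Tracing through the definitions, the embedding \(g|_F\) induces the prime \(\mathfrak{q}_g:=(g^{-1}\widetilde{\mathfrak{P}})\cap F\), since for \(x\in\mathcal{O}_F\) one has \(v(g(x))>0\) if and only if \(x\in g^{-1}\widetilde{\mathfrak{P}}\), where \(v\) is the valuation of \(\overline{\mathbb{Q}}\) attached to \(\overline{\mathcal{P}}\). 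Two elements \(g,g'\) then induce the same prime precisely when \(g'^{-1}\widetilde{\mathfrak{P}}\) and \(g^{-1}\widetilde{\mathfrak{P}}\) lie in the same \(H\)-orbit of primes of \(\widetilde{F}\), and a short manipulation shows this happens if and only if \(g'\in DgH\). Hence \(DgH\mapsto\mathfrak{q}_g\) is a bijection from \(D\setminus G/H\) onto the set of primes of \(F\) above \(p\), recovering the familiar parametrisation of the decomposition of \(p\) in \(F\).

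It then remains to express the height and slope of each \(\bar{A}[\mathfrak{q}_g^\infty]\) in terms of \(DgH\). Since composition with the chosen place \(\overline{\mathbb{Q}}\hookrightarrow\overline{\mathbb{Q}}_p\) sets up a bijection between \(\Hom_{\Qalg}(F,\widetilde{F})\) and the embeddings \(F\hookrightarrow\overline{\mathbb{Q}}_p\), the degree \([F_{\mathfrak{q}_g}:\mathbb{Q}_p]\) equals the number of embeddings inducing \(\mathfrak{q}_g\), i.e. the number of left cosets \(g'H\) contained in \(DgH\), which is \(|DgH|/|H|\); this is the claimed multiplicity. Likewise \(|\Phi_{\mathfrak{q}_g}|\), counting those embeddings that in addition lie in \(\widetilde{\Phi}\), equals \(|DgH\cap\widetilde{\Phi}|/|H|\), because \(\widetilde{\Phi}\) is a union of cosets \(g'H\). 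Dividing gives \(\lambda_{\mathfrak{q}_g}=|DgH\cap\widetilde{\Phi}|/|DgH|\), exactly as asserted; as a consistency check the multiplicities sum to \([F:\mathbb{Q}]=2g\), and \(\sum_{g}\lambda_g\,(|DgH|/|H|)=|\widetilde{\Phi}|/|H|=|\Phi|=\dim A\), matching the height and dimension of \(A_{\mathcal{P}}\).

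The substantive input is the Shimura--Taniyama formula itself; the rest is bookkeeping. Accordingly, the main obstacle I anticipate is not the group theory but ensuring the hypotheses under which the formula applies are genuinely in force --- in particular that the reduction is good (which is assumed), that the CM is by the full ring of integers \(\mathcal{O}_F\), or at least by an order maximal at \(p\), and that one works throughout with a single compatible \(p\)-adic place, so that ``the prime induced by \(\phi\)'' is unambiguous. I would also take care that the normalisation of slopes in the formula (via \(\ord_{\mathfrak{q}}\) of the Frobenius Weil number) agrees with the \(|\Phi_{\mathfrak{q}}|/[F_{\mathfrak{q}}:\mathbb{Q}_p]\) form used above; once this is pinned down, the double-coset description follows formally.
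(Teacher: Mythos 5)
Your proposal is correct and follows essentially the same route as the paper: the local input is the Shimura--Taniyama formula for $p$-divisible groups (the paper's \S2, giving slopes $|\Phi_w|/[F_w:\mathbb{Q}_p]$ with multiplicity $[F_w:\mathbb{Q}_p]$), followed by the same double-coset bookkeeping identifying primes of $F$ above $p$ with $D\setminus G/H$, heights with $|DgH|/|H|$, and dimensions with $|DgH\cap\widetilde{\Phi}|/|H|$. The only caveat you raise that the paper does not need is the maximality of the order at $p$: since the Newton polygon is an isogeny invariant and the lift $\pi\in\mathcal{O}_F$ of Frobenius exists already for CM by $F$ inside $\mathbb{Q}\otimes_{\mathbb{Z}}\End(A)$ (via Serre--Tate), no hypothesis beyond the stated one is required.
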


\begin{remark} Theorem 1.2 is in fact true as stated even if the complex multiplication is defined only after passing to a finite extension \(E^{\prime}\) of  \(E\).  This follows immediately from the fact that the Newton polygon of an abelian variety over finite field can be computed after passing to a finite extension.
\end{remark}

We briefly make some historical remarks on Theorem 1.2.  Zaystev \cite{Zaystev} has shown that if \(A\) has complex multiplication by the full ring of integers of \(F\) and if \(p\) is unramified in \(F\) then the \(BT_1\) group scheme \(A_{\mathcal{P}}[p]\) is determined up to geometric isomorphism by the CM type \(\Phi\) and an appropriate decomposition group in the Galois closure of \(F\). In its own way this generalises the elliptic curve case, and in low dimensions this has been made explicit by Goren (for abelian surfaces in work predating the general treatment of Zaystev, \cite{Goren}), and later Zaystev (for abelian threefolds, \cite{Zaystev}). Our Theorem 1.2 is an analogue of this result for the formal isogeny type of \(A_{\mathcal{P}}\) which is explicit in all dimensions, and which can handle the case when \(p\) is ramified in \(F\). The precise relationship between the Newton polygon of \(A_{\mathcal{P}}\) (which depends only on \(A_{\mathcal{P}}\) up to isogeny) and the isomorphism class of \(A_{\mathcal{P}}[p]\) is rather subtle, and we do not discuss it here (see for example,  \cite{Oort}).

As we saw from the work of Sugiyama, in some cases there is a more direct relationship between the decomposition of \(p\) in \(F\) and the formal isogeny type of the reduction of \(A\) at \(\mathcal{P}\).   
In \S4 we show that the the formal isogeny type of the reduction of \(A\) at \(\mathcal{P}\) places certain restrictions on the decomposition of \(p \) in \(F\), independent of the CM type. In particular we deduce:

\begin{theorem}Let \(A/E\) be an abelian variety of odd dimension \(g\) which has complex multiplication by  \(F\). Suppose there is a prime \(\mathcal{P}\) of \(E\) above \(p\) at which \(A\) has good supersingular reduction. Then there is at least one prime of \(F^{+}\) above \(p\) which is either inert or ramified in \(F\). 
\end{theorem}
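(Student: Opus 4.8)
The plan is to apply Theorem 1.2 and extract a parity constraint. Fix a lift \(\tilde{c} \in G\) of complex conjugation \(c \in \Gal(F/F^{+})\); since \(c(F) = F\) this \(\tilde{c}\) normalises \(H\), satisfies \(\tilde{c}^2 \in H\), and the CM condition on \(\Phi\) reads \(\widetilde{\Phi} \sqcup \widetilde{\Phi}\tilde{c} = G\). Recall that the double cosets \(D \backslash G / H\) appearing in Theorem 1.2 index the primes \(\mathfrak{q}\) of \(F\) above \(p\). For the prime attached to \(D\sigma H\) I set \(m_{\mathfrak{q}} = |D\sigma H|/|H| = e_{\mathfrak{q}}f_{\mathfrak{q}}\) (its multiplicity in the Newton polygon) and \(a_{\mathfrak{q}} = |D\sigma H \cap \widetilde{\Phi}|/|H|\), which is the height \(\lambda_{\mathfrak{q}} m_{\mathfrak{q}}\) that \(\mathfrak{q}\) contributes. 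Because both \(D\sigma H\) and \(\widetilde{\Phi}\) are right \(H\)-stable, \(a_{\mathfrak{q}}\) is a nonnegative integer; summing over the partition of \(G\) into double cosets gives \(\sum_{\mathfrak{q}} m_{\mathfrak{q}} = [F:\mathbb{Q}] = 2g\) and \(\sum_{\mathfrak{q}} a_{\mathfrak{q}} = |\widetilde{\Phi}|/|H| = |\Phi| = g\).

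Next I would record how complex conjugation acts. Right multiplication by \(\tilde{c}\) descends to a well-defined involution on \(D \backslash G / H\), namely \(D\sigma H \mapsto D\sigma\tilde{c}H\), and this is exactly the action of \(\Gal(F/F^{+})\) on the primes of \(F\) above \(p\). Writing \(H^{+} = \langle H, \tilde{c}\rangle = \Gal(\widetilde{F}/F^{+})\), the prime of \(F^{+}\) corresponding to \(D\sigma H^{+}\) decomposes as \(D\sigma H^{+} = D\sigma H \cup D\sigma\tilde{c}H\); hence the fixed points of the involution (where \(D\sigma H = D\sigma\tilde{c}H\)) are precisely the primes of \(F^{+}\) with a single prime of \(F\) above them, i.e. those inert or ramified in \(F\), while the two-element orbits are the split primes. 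Using \(\widetilde{\Phi}\tilde{c} = G \setminus \widetilde{\Phi}\) together with \(\widetilde{\Phi}\tilde{c}^{-1} = \widetilde{\Phi}\tilde{c}\), a one-line computation gives \(|D\sigma\tilde{c}H \cap \widetilde{\Phi}| = |D\sigma H| - |D\sigma H \cap \widetilde{\Phi}|\), that is \(a_{\mathfrak{q}^{c}} = m_{\mathfrak{q}} - a_{\mathfrak{q}}\), so \(a_{\mathfrak{q}} + a_{\mathfrak{q}^{c}} = m_{\mathfrak{q}}\). This is just the Newton-polygon symmetry \(\lambda_{\mathfrak{q}} + \lambda_{\mathfrak{q}^{c}} = 1\).

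Finally I would impose supersingularity and count modulo \(2\). By hypothesis every slope is \(1/2\), so \(a_{\mathfrak{q}} = m_{\mathfrak{q}}/2\) for every \(\mathfrak{q}\); in particular every \(m_{\mathfrak{q}}\) is even. Grouping \(\sum_{\mathfrak{q}} a_{\mathfrak{q}} = g\) by \(c\)-orbits, each two-element orbit \(\{\mathfrak{q}, \mathfrak{q}^{c}\}\) contributes \(a_{\mathfrak{q}} + a_{\mathfrak{q}^{c}} = m_{\mathfrak{q}}\), an even number, whereas each fixed point contributes \(a_{\mathfrak{q}} = m_{\mathfrak{q}}/2\). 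Therefore \(g \equiv \sum_{\text{fixed } \mathfrak{q}} a_{\mathfrak{q}} \pmod 2\). Since \(g\) is odd the right-hand sum must be nonzero, so the involution has at least one fixed point; equivalently there is at least one prime of \(F^{+}\) above \(p\) that is inert or ramified in \(F\).

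The genuinely non-formal inputs are the two dictionaries: that \(D \backslash G / H\) indexes the primes of \(F\) with \(m_{\mathfrak{q}} = e_{\mathfrak{q}}f_{\mathfrak{q}}\), and that the fixed points of the \(\tilde{c}\)-involution are exactly the inert-or-ramified primes of \(F^{+}\). I expect checking that \(\tilde{c}\) really implements the Galois action of \(\Gal(F/F^{+})\) on primes, and verifying the double-coset bookkeeping for \(H^{+}\), to be the fussiest part; once these are in place, the identity \(a_{\mathfrak{q}} + a_{\mathfrak{q}^{c}} = m_{\mathfrak{q}}\) and the final parity count are immediate. It is precisely this parity that has no force when \(g\) is even, consistent with the \(F = \mathbb{Q}(\zeta_8)\) example, where a supersingular reduction occurs with the unique prime of \(F^{+}\) above \(p\) splitting in \(F\).
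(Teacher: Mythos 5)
Your proof is correct, but it follows a genuinely different route from the paper's. The paper never invokes Theorem 1.2 for this result: in \S4 it works directly with the \(p\)-divisible group \(\mathcal{G} \sim \prod_{v|p}\mathcal{G}_v\) decomposed along places \(v\) of \(F^{+}\), observes that \(d_v/h_v = 1/2\) always (conjugate embeddings in the CM type restrict to the same embedding of \(F^{+}\)), and proves the sharper local statement (Theorem 4.2): if \(\mathfrak{p}\) splits as \(w w^c\), then \(\mathcal{G}_v \sim \mathcal{G}_w \times \mathcal{G}_{w^c}\) with both factors of height \(e(\mathfrak{p}|p)f(\mathfrak{p}|p)\), and supersingularity forces each factor to have slope \(1/2\), hence even height. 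The stated theorem is then the one-line parity consequence of \(g = \sum_{\mathfrak{p}|p} e(\mathfrak{p}|p)f(\mathfrak{p}|p)\). Your argument pulls the same parity through the double-coset dictionary of Theorem 1.2 instead: your observation that every \(m_{\mathfrak{q}}\) is even (forced by integrality of \(a_{\mathfrak{q}} = m_{\mathfrak{q}}/2\)) is exactly the content of the paper's Theorem 4.2, and your grouping of \(\sum_{\mathfrak{q}} a_{\mathfrak{q}} = g\) into \(c\)-orbits mirrors the paper's degree formula for \(F^{+}/\mathbb{Q}\). The steps you flag as fussy all check out: \(\tilde{c}\) normalises \(H\) since \(\tilde{c}(F) = F\); right multiplication by \(\tilde{c}\) is well defined on \(D\backslash G/H\) and implements conjugation on primes of \(F\); and \(\widetilde{\Phi}\tilde{c}^{-1} = \widetilde{\Phi}\tilde{c} = G \setminus \widetilde{\Phi}\) gives \(a_{\mathfrak{q}^{c}} = m_{\mathfrak{q}} - a_{\mathfrak{q}}\). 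The trade-off: your route is a purely formal deduction from Theorem 1.2, but carries its machinery (the decomposition group, the CM type, the coset bookkeeping); the paper's route is deliberately type-independent and lighter --- it needs only the splitting of \(\mathcal{G}\) under the \(F^{+}\otimes_{\mathbb{Q}}\mathbb{Q}_p\)-action --- and it isolates Theorem 4.2 as a standalone statement, which the paper then reuses to get the full classification of supersingular reduction when \(F^{+}/\mathbb{Q}\) is Galois (Corollary 4.4).
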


In other words the situation for abelian surfaces we observed in Example 1, where \(A\) can have supersingular reduction at \(\mathcal{P}\) and yet every prime of \(F^+\) above \(p\) can split in \(F\), can be ruled out whenever the dimension of \(A\) is odd.

\section{Slopes of abelian varieties with complex multiplication}

Determining the formal isogeny type of the reduction of an abelian variety at a prime ideal is a purely local problem, and so we work in the analogous local situation. To this end let \(L\) be a finite extension of \(\mathbb{Q}_p\), \(\mathcal{O}_L\) its ring of integers, \(\mathcal{P}\) its maximal ideal, \(k\) the residue field of \(L\) and \(q = p^f\) the cardinality of \(k\). 
We let \({A}\) be an abelian variety over \(L\) of dimension \(g\) with good reduction \(A_{\mathcal{P}}\), and suppose \(A\) has complex multiplication by a CM field \(F\) of degree \(2g\), i.e. we have a  given homomorphism \(\iota: F \hookrightarrow \mathbb{Q} \otimes_{\mathbb{Z}} \End(A) \).

If \(C\) is any algebraically closed field of characteristic zero then a \(C\)-CM type for \(F\) is a set of representatives for the free action of \(\Gal(F/F^+)\) on \( \Hom_{\Qalg}(F, C)\), i.e. a collection of \(g\) embeddings of \(F\) into \(C\) such that no two are conjugate. If we fix an algebraic closure \(\overline{\mathbb{Q}}_p\) of \(\mathbb{Q}_p\) and an embedding \(j : L \hookrightarrow \overline{\mathbb{Q}}_p\) then the action of \(F\) on \(\Lie(A_{\overline{\mathbb{Q}}_p}) = \Lie(A)\otimes_{L, j}\overline{\mathbb{Q}}_p\) defines a \(\overline{\mathbb{Q}}_p\)-CM type \(\Phi \subset \Hom_{\Qalg}(F, \overline{\mathbb{Q}}_p)\) (this can be easily reduced to the classical case over \(\mathbb{C}\), where it follows from the complex uniformisation of abelian varieties). We now review how the Newton polygon of \({A}_{\mathcal{P}}\) can be expressed in terms of this CM type. 

First recall that the slopes of \(A_{\mathcal{P}}\) are the (suitably normalised) \(p\)-adic valuations of the eigenvalues of the \(q\)-Frobenius endomorphism of \(A_{\mathcal{P}}\).  More precisely, if \(G \in \mathbb{Z}[T]\) is the characteristic polynomial of \(q\)-Frobenius endomorphism of \(A_{\mathcal{P}}\) then we can write 
\( G(T) = \prod_{i=1}^{2g} (T - \alpha_i) \)
for some \(\alpha_i \in \overline{\mathbb{Q}}_p\). The slopes of \(A_{\mathcal{P}}\) are by definition the rational numbers \[\lambda_i = \ord_p(\alpha_i)/f\] (here \(\ord_p\) is normalised so that \(\ord_p(p)=1 \)), and the data of the slopes of \(A_{\mathcal{P}}\) can be represented visually by a Newton polygon. It follows from this definition that the Newton polygon of \(A_{\mathcal{P}}\) depends only the abelian variety \(A_{\mathcal{P}}\) up to isogeny, and is invariant under replacing \(k\) by some finite extension \(k^{\prime}\). In fact, by the Dieudonn\'e-Manin classification, this Newton polygon precisely classifies the associated \(p\)-divisible group \(A_{\mathcal{P}}(p)\) up to isogeny over an algebraic closure of \(k\) (c.f. \cite{Demazure}). 

Since \(A\) has complex multiplication we can actually compute the Newton polygon of \(A_{\mathcal{P}}\) in characteristic 0. Recall there is always an element \(\pi\) of \(\mathcal{O}_F\) such that \(\iota(\pi)\) reduces to the \(q\)-Frobenius endomorphism of \(A_{\mathcal{P}}\) (the proof in \S7 of \cite{Serre-Tate}  works in this setting). Then the characteristic polynomial of the \(q\)-Frobenius endomorphism of \(A_{\mathcal{P}}\) is equal to the characteristic polynomial of \(\pi \in \mathcal{O}_F\) and so the slopes of \(A_{\mathcal{P}}\) are just the rational numbers \[\lambda_w =  \ord_w(\pi) / \ord_w(q) \] appearing with multiplicity \([F_w : \mathbb{Q}_p]\) as \(w\) ranges through all places of \(F\) above \(p\) (note here the choice of normalisation of \(\ord_w\) cancels).
This can then be reinterpreted in terms of the associated \(p\)-divisible group \(\mathcal{G} = \mathcal{A}(p) \), where \(\mathcal{A}/ \mathcal{O}_L\) denotes the N\'eron model of \(A\). 
 Indeed the inclusion \(\iota: F \hookrightarrow \mathbb{Q} \otimes_{\mathbb{Z}} \End(A) \) induces an inclusion \(\iota : \mathbb{Q}_p \otimes_{\mathbb{Q}} F \hookrightarrow \mathbb{Q}_p \otimes_{\mathbb{Z}_p} \End(\mathcal{G}) \) and corresponding to the decomposition \(\mathbb{Q}_p \otimes_{\mathbb{Q}} F  \simeq \prod_{w|p} F_w\) (where \(w\) runs over places of \(F\) above \(p\)) there is a decomposition 
\[ \mathcal{G} \sim \prod_{w|p} \mathcal{G}_w\]
of \(p\)-divisible groups up to isogeny. Let  \(h_w\) and \(d_w\) respectively denote the height and dimension of \(\mathcal{G}_w\) -  we will refer to the ratio \(d_w/h_w\) as the slope of \(\mathcal{G}_w\). Each \(\mathcal{G}_w\) is a \(p\)-divisible group equipped (via \(\iota)\) with an action of the local field \(K = F_w\) over the which the rational Tate module \(V\mathcal{G}_w\) is free of rank 1. This is precisely what it means to say that each \(\mathcal{G}_w\) is a \(p\)-divisible group with complex multiplication by the local field \(F_w\), and the following key result of Tate enables us to relate the slopes of the \(\mathcal{G}_w\) to the slopes of \(A_{\mathcal{P}}\).

\begin{theorem}[Shimura-Taniyama formula for \(p\)-divisible groups, \cite{Tate}, \S5] Let \(\mathcal{G}\) be a \(p\)-divisible group over \(\mathcal{O}_L\) of dimension \(d\) and height \(h\), and with complex multiplication by a local field \(K\). If there is an element \(\pi \in \mathcal{O}_K\) lifting the action of \(q\)-Frobenius  then \({\ord_K(\pi)}/{\ord_K(q)} = {d}/{h} \). 

\end{theorem}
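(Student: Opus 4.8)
The plan is to compute the sum of the slopes of \(\mathcal{G}\) in two ways and compare. Since the rational Tate module \(V\mathcal{G}\) is free of rank \(1\) over \(K\), its dimension as a \(\mathbb{Q}_p\)-vector space is \([K:\mathbb{Q}_p]\); as this dimension equals the height we get \(h = [K:\mathbb{Q}_p]\). Write \(e_K, f_K\) for the ramification and residue degrees of \(K/\mathbb{Q}_p\), so \(h = e_K f_K\), and normalise \(\ord_K\) so that a uniformiser has \(\ord_K = 1\) (the ratio in the statement does not depend on this choice). Because \(\pi \in K\) lifts the \(q\)-Frobenius and \(K\) acts on the rank-\(1\) module \(V\mathcal{G}\) by multiplication, the characteristic polynomial of \(q\)-Frobenius coincides with that of multiplication by \(\pi\) on \(K\), namely \(\prod_{g}(T - g(\pi))\), where \(g\) runs over the \(h\) embeddings \(K \hookrightarrow \overline{\mathbb{Q}}_p\). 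Hence the Frobenius eigenvalues are exactly the conjugates \(g(\pi)\).

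Next I would use the standard fact from Dieudonn\'e--Manin theory (c.f. \cite{Demazure}) that the Newton polygon of a \(p\)-divisible group of dimension \(d\) and height \(h\) runs from \((0,0)\) to \((h,d)\); equivalently, the sum of its slopes equals \(d\). Since the slopes are \(\lambda_i = \ord_p(\alpha_i)/f\) and the \(\alpha_i\) are the \(g(\pi)\), this yields \(d = \sum_i \lambda_i = \tfrac{1}{f}\sum_{g}\ord_p\big(g(\pi)\big)\). The inner sum is the valuation of \(\prod_g g(\pi) = \mathrm{Nm}_{K/\mathbb{Q}_p}(\pi)\), and a routine local computation gives \(\ord_p\big(\mathrm{Nm}_{K/\mathbb{Q}_p}(\pi)\big) = f_K\,\ord_K(\pi)\). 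Therefore \(d = \tfrac{f_K}{f}\,\ord_K(\pi)\).

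To finish I would divide through by the height. Using \(h = e_K f_K\) together with \(\ord_K(q) = \ord_K(p^{f}) = f\,e_K\), we find \(\tfrac{d}{h} = \tfrac{f_K\,\ord_K(\pi)/f}{e_K f_K} = \tfrac{\ord_K(\pi)}{f\,e_K} = \tfrac{\ord_K(\pi)}{\ord_K(q)}\), as claimed. The algebra here is routine once the normalisations are fixed; the real content, and the step I would treat most carefully, is the input from \(p\)-divisible group theory in the middle paragraph: one must verify that the characteristic polynomial of \(q\)-Frobenius on the crystalline realisation is exactly the norm polynomial of \(\pi\) (so that the eigenvalues are the conjugates \(g(\pi)\)), and that the dimension equals the sum of the Newton slopes. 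These are precisely the facts that encode the geometry of \(\mathcal{G}\), so the care lies in invoking them correctly rather than in the concluding computation.
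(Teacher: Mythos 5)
Your concluding computation is fine, and the bookkeeping identities you use (\(h=[K:\mathbb{Q}_p]=e_Kf_K\), \(\ord_p(\mathrm{Nm}_{K/\mathbb{Q}_p}(\pi))=f_K\ord_K(\pi)\), \(\ord_K(q)=f\,e_K\), sum of Newton slopes \(=d\)) are all correct. The genuine gap is the middle step, and it is not a matter of ``invoking standard facts correctly'': the Newton slopes of \(\mathcal{G}_k\) are defined through the Dieudonn\'e module of the \emph{special} fibre --- by Dieudonn\'e--Manin (c.f.\ \cite{Demazure}) they are \(\ord_p(\alpha_i)/f\) where the \(\alpha_i\) are the eigenvalues of the linearised \(q\)-Frobenius \(F^f\) acting on \(\mathbb{D}(\mathcal{G}_k)[1/p]\) --- whereas your identification of these eigenvalues with the conjugates \(g(\pi)\) is argued from the \(K\)-module structure of \(V\mathcal{G}\), which lives on the \emph{generic} fibre. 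For an abelian variety one could mediate through the \(\ell\)-adic Weil polynomial, but a \(p\)-divisible group has no \(\ell\neq p\) realisation: the only bridge between \(V\mathcal{G}\) (\(p\)-adic \'etale, generic fibre) and \(\mathbb{D}(\mathcal{G}_k)[1/p]\) (crystalline, special fibre) is a comparison theorem, and the compatibility of characteristic polynomials of endomorphisms under that comparison is exactly the fact you are assuming. Since, granted the endpoint fact, that compatibility yields the theorem in three lines, the unproved step carries essentially all of the content.

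The gap can be closed without comparison theorems by working entirely on the special fibre: \(\mathbb{D}(\mathcal{G}_k)[1/p]\) is a module over \(K\otimes_{\mathbb{Q}_p}L_0\) (with \(L_0=W(k)[1/p]\)) of \(L_0\)-dimension \(h=[K:\mathbb{Q}_p]\); the \(\sigma\)-semilinear Frobenius is bijective on it and commutes with \(K\); and \(\sigma\) permutes the primitive idempotents of \(K\otimes_{\mathbb{Q}_p}L_0\) transitively (any orbit sum is a \(\sigma\)-invariant idempotent, hence lies in \(K\), hence equals \(1\)). It follows that \(\mathbb{D}(\mathcal{G}_k)[1/p]\) is free of rank one over \(K\otimes_{\mathbb{Q}_p}L_0\), so the characteristic polynomial of \(\pi\) (equivalently of \(F^f\)) on it really is \(\prod_g\bigl(T-g(\pi)\bigr)\); with that supplied, your argument goes through. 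You should also know that the paper itself does not prove this theorem --- it cites \cite{Tate} --- and Tate's argument is both different from and more elementary than your route: \(\pi\) is an isogeny of \(\mathcal{G}\) over \(\mathcal{O}_L\) whose kernel is finite flat, so its degree may be computed on either fibre; on the special fibre it equals the degree \(q^d\) of the \(q\)-Frobenius, while on the generic fibre it equals \([T\mathcal{G}:\pi T\mathcal{G}]=p^{f_K\ord_K(\pi)}\), and equating the two gives \(fd=f_K\ord_K(\pi)\), i.e.\ the formula, with no slope theory and no crystalline/\'etale comparison at all.
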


 The Shimura-Taniyama formula (applied to each \(\mathcal{G}_w\) in turn) tells us that the slopes of \(A_{\mathcal{P}}\) are precisely the slopes of the \(\mathcal{G}_w\) taken with multiplicity \([F_w : \mathbb{Q}_p] = h_w\), i.e. 
  \[\lambda_w = \ord_w(\pi) / \ord_w(q) = d_w/h_w\] It is now easy to express each \(\lambda_w\) in terms of the \(\overline{\mathbb{Q}}_p\)-CM type \(\Phi\). Indeed if \(\Phi_w\) denotes the subset of \(\Phi\) consisting of embeddings which induce the place \(w\) on \(F\) then the dimension of \(\mathcal{G}_w\) is  \( |\Phi_w|\) (c.f. \cite{Tate}, \S 5). In summary:
 
\begin{conclusion}  The slopes of \(A_{\mathcal{P}}\) are precisely the slopes of the \(p\)-divisible groups \(\mathcal{G}_w\), i.e. the numbers \(\lambda_w = d_w / h_w = |\Phi_w| / [F_w : \mathbb{Q}_p]\) taken with multiplicity \(h_w = [F_w : \mathbb{Q}_p] \) as \(w\) ranges through all places of \(F\) above \(p\). 
\end{conclusion}

\section{An analogue of Deuring's Criterion}

We now place ourselves in the global situation of \S1.  We have two number fields \(E, F \subset \overline{\mathbb{Q}}\) and an abelian variety \(A / E\) of dimension \(g\) and with complex multiplication by \(F\). The CM type of \(A\) is the subset \(\Phi \subset  \Hom_{\Qalg}(F, \overline{\mathbb{Q}})\) of size \(g\) defined by the action of \(F\) on \(\Lie(A_{\overline{\mathbb{Q}}}) = \Lie(A) \otimes_{E} \overline{\mathbb{Q}}\), and it again it follows from the complex analytic theory that no two of these embeddings are conjugate. 

If we let \( \widetilde{F}\) be the Galois closure of \(F\) in \(\overline{\mathbb{Q}}\) then the CM type \(\Phi\) can be identified with a subset of \(\Hom_{\Qalg}(F,  \widetilde{F})\). 
We will write \(G\) for \( \Gal( \widetilde{F}/ \mathbb{Q})\), \(H\) for \( \Gal( \widetilde{F}/F)\)
and \( \widetilde{\Phi}\) for the set of all lifts to \( \widetilde{F}\) of elements in \(\Phi\), i.e. \(
 \widetilde{\Phi} = \{g \in G : g|_F \in \Phi \} \subset G \). The formal isogeny type of the reduction of \(A\) at a prime of good reduction can be computed in terms of purely group theoretic data as follows:

\begin{theorem} Let \(\mathcal{P}\) be a prime of \(E\) at which \(A\) has good reduction \(A_{\mathcal{P}}\).  Pick any prime \(\overline{\mathcal{P}}\) of \(\overline{\mathbb{Q}}\) above \(\mathcal{P}\) and let \( \widetilde{\mathfrak{P}}\) be the prime of \( \widetilde{F}\) below \(\overline{\mathcal{P}}\) and \(D = D_ { \widetilde{\mathfrak{P}}} \subset G\) be the decomposition group of \( \widetilde{\mathfrak{P}}\). Then the slopes of \(A_{\mathcal{P}}\) can be computed as the numbers \(\lambda_g = {  |DgH \cap  \widetilde{\Phi} | }/{ |DgH|}\) appearing with multiplicity \({ |DgH|}/{|H|}\) as \(g\) ranges through a set of  representatives for double cosets \(D \setminus G  \; / H\). 
\end{theorem}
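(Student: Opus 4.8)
The plan is to deduce the theorem from the Conclusion of \S2 by means of the classical dictionary relating places of \(F\) above \(p\) to double cosets in \(D \setminus G / H\). Since the Newton polygon is insensitive to finite extensions of the base and may be computed locally, I would first base-change \(A\) to the completion \(L = E_{\mathcal{P}}\), a finite extension of \(\mathbb{Q}_p\) over which \(A\) acquires good reduction with the same CM by \(F\). The choice of \(\overline{\mathcal{P}}\) furnishes an embedding \(\iota_p : \overline{\mathbb{Q}} \hookrightarrow \overline{\mathbb{Q}}_p\) (and a compatible \(j : L \hookrightarrow \overline{\mathbb{Q}}_p\)) whose associated valuation restricts to \( \widetilde{\mathfrak{P}}\) on \( \widetilde{F}\); under \(\iota_p\) the global \(\overline{\mathbb{Q}}\)-CM type \(\Phi\) is carried to the \(\overline{\mathbb{Q}}_p\)-CM type used in \S2. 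The Conclusion then gives that the slopes of \(A_{\mathcal{P}}\) are the numbers \(\lambda_w = |\Phi_w| / [F_w : \mathbb{Q}_p]\), each appearing with multiplicity \([F_w : \mathbb{Q}_p]\), as \(w\) runs over places of \(F\) above \(p\). The task is purely to re-express \(|\Phi_w|\) and \([F_w : \mathbb{Q}_p]\) group-theoretically.

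Next I would set up the combinatorial dictionary. Identifying \(\Hom_{\Qalg}(F, \widetilde{F})\) with \(G / H\) via \(g \mapsto g|_F\) (so that fibres are right \(H\)-cosets), the set \( \widetilde{\Phi}\) is precisely the union of those cosets \(gH\) lying in \(\Phi\), and in particular \( \widetilde{\Phi} H = \widetilde{\Phi}\). Since \(D\) is the stabiliser of \( \widetilde{\mathfrak{P}}\) and \(G\) acts transitively on the primes of \( \widetilde{F}\) above \(p\), these primes are parametrised by \(G/D\) via \(g'D \mapsto g' \widetilde{\mathfrak{P}}\). For \(\phi = g|_F \in \Phi\), the place of \(F\) induced through \(\overline{\mathcal{P}}\) corresponds to the prime \(\mathfrak{p}_\phi = F \cap g^{-1} \widetilde{\mathfrak{P}}\), i.e.\ to the \(H\)-orbit of \(g^{-1} \widetilde{\mathfrak{P}}\); this depends on \(g\) only through the double coset \(DgH\) (equivalently, via inversion, through \(Hg^{-1}D \in H \setminus G / D\)). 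This yields the required bijection between places \(w \mid p\) of \(F\) and double cosets \(D \setminus G / H\).

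It remains to match the two counts. The embeddings \(g'H\) inducing the place \(w\) attached to \(DgH\) are exactly those with \(g' \in DgH\), of which there are \(|DgH|/|H|\); the standard local computation \([F_w : \mathbb{Q}_p] = |D| / |H \cap gDg^{-1}| = |DgH|/|H|\) (the decomposition group of \(g^{-1} \widetilde{\mathfrak{P}}\) in \(H\) being \(H \cap gDg^{-1}\)) confirms this equals the multiplicity \([F_w : \mathbb{Q}_p]\) of the Conclusion. Because both \(DgH\) and \( \widetilde{\Phi}\) are stable under right multiplication by \(H\), their intersection is a union of right \(H\)-cosets, whence \(|\Phi_w| = |DgH \cap \widetilde{\Phi}| / |H|\). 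Dividing gives \(\lambda_w = |DgH \cap \widetilde{\Phi}| / |DgH|\), and assembling over a set of double coset representatives produces exactly the slopes and multiplicities in the statement.

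The substantive number-theoretic inputs — transitivity of the Galois action on primes of \( \widetilde{F}\), and the identification of local degrees with orders of decomposition groups — are classical, so I expect the main obstacle to be purely a matter of careful bookkeeping: keeping the left/right coset conventions consistent, correctly tracking the inversion that interchanges \(H \setminus G / D\) with \(D \setminus G / H\), and verifying that the global CM type \(\Phi\) is faithfully transported to the local CM type of \S2 through the single chosen prime \(\overline{\mathcal{P}}\). Once these compatibilities are pinned down, the equality of the two descriptions of the slopes is immediate.
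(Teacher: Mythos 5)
Your proposal is correct and follows essentially the same route as the paper: reduce to the local setting, invoke the Shimura--Taniyama computation of \S2 (slopes $\lambda_w = |\Phi_w|/[F_w:\mathbb{Q}_p]$ with multiplicity $[F_w:\mathbb{Q}_p]$), and translate via the dictionary sending the double coset $DgH$ to the prime $F \cap g^{-1}\widetilde{\mathfrak{P}}$, with both counts $h_w = |DgH|/|H|$ and $d_w = |DgH \cap \widetilde{\Phi}|/|H|$ obtained exactly as in the paper's proof. The only blemish is a harmless conjugation slip in your parenthetical: the decomposition group of $g^{-1}\widetilde{\mathfrak{P}}$ is $g^{-1}Dg$, so the relevant subgroup is $H \cap g^{-1}Dg$ (not $H \cap gDg^{-1}$), which indeed gives $[F_w:\mathbb{Q}_p] = |D|/|H \cap g^{-1}Dg| = |DgH|/|H|$.
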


\begin{proof}  Let \(\overline{w}\) be the place of  \(\overline{\mathbb{Q}}\) corresponding to \(\overline{\mathcal{P}}\) and \( \widetilde{w}\) the place of \( \widetilde{F}\) below \(\overline{w}\).  If \(L\) denotes the completion of \(E\) at \(\mathcal{P}\) then we can consider \(A\) as an abelian variety over \(L\) and a choice of embedding \(\iota : \overline{\mathbb{Q}} \hookrightarrow \overline{\mathbb{Q}}_p\) inducing the place \(\overline{w}\) on \(\overline{\mathbb{Q}}\) induces an inclusion \(j : L \hookrightarrow \overline{\mathbb{Q}}_p \) with respect to which \(A\) has \(\overline{\mathbb{Q}}_p\)-CM type \(\iota \circ \Phi \subset \Hom_{\Qalg}(F, \overline{\mathbb{Q}}_p)\).
The associated \(p\)-divisible group \(\mathcal{G} = \mathcal{A}(p)\) is isogenous to a product
\[ \mathcal{G} \sim \prod_{w|p} \mathcal{G}_w\]
where each \(\mathcal{G}_w\) is a \(p\)-divisible group over \(\mathcal{O}_L\) of height \( h_w = [F_w : \mathbb{Q}_p]\) and dimension \(d_w\) equal to the number of embeddings in the \(\overline{\mathbb{Q}}_p\)-CM type \(\iota \circ \Phi\) which induce the place \(w\) of \(F\). The slopes of \(A_{\mathcal{P}}\) are then the ratios \(\lambda_w = d_w/h_w\) appearing with multiplicity \(h_w\) (c.f. \S2). 

Places \(w\) of \(F\) dividing \(p\) correspond to prime ideals \(\mathfrak{P}\) of \(\mathcal{O}_F\) above \(p\), and since we have fixed a prime \( \widetilde{\mathfrak{P}}\) of \( \widetilde{F}\) above \(p\) we can set up a correspondence between primes \(\mathfrak{P}\) of \(\mathcal{O}_F\) above \(p\) and double cosets \(D \setminus G \; / H\). A prime \(\mathfrak{P}\) of \(F\) corresponds to the double coset \(DgH\) if there is a prime of \( \widetilde{F}\) above \(\mathfrak{P}\) which is mapped to our chosen prime \( \widetilde{\mathfrak{P}}\) of \( \widetilde{F}\) by \(g \in G\). We can also think of this correspondence directly in terms of places. Firstly, we can use the map \(\iota :  \overline{\mathbb{Q}} \hookrightarrow \overline{\mathbb{Q}}_p\) to identify the sets 
\( G =  \Hom_{\Qalg}( \widetilde{F} ,  \widetilde{F}) \) and  \(\Hom_{\Qalg}( \widetilde{F}, \overline{\mathbb{Q}}_p) \). 
With these identifications made, if \(w\) is a place of \(F\) associated to the double coset \(DgH\) then this double coset \(DgH\) simply corresponds to the subset of \(\Hom_{\Qalg}( \widetilde{F}, \overline{\mathbb{Q}}_p) \) consisting of all embeddings \( \widetilde{F} \hookrightarrow \overline{\mathbb{Q}}_p\) which induce the place \(w\) on \(F\).  

Consequently the height \(h_w\) of \(\mathcal{G}_w\) can be computed as \(h_w = [F_w: \mathbb{Q}_p] =  |DgH|  / |H| \). Moreover the intersection \(DgH \cap  \widetilde{\Phi} \subset G\) corresponds to all the embeddings \( \widetilde{F} \hookrightarrow \overline{\mathbb{Q}}_p\) which induce the place \(w\) on \(F\) and which restrict to elements of the \(\overline{\mathbb{Q}}_p\)-CM type \(\iota \circ \Phi\) of \(A / L\).  Hence the dimension \(d_w\) can be computed as \(|DgH \cap \widetilde{\Phi} | / |H| \). 
\end{proof}

It is an easy exercise to see that this reproduces the classical Deuring criterion in the case when \(A\) is an elliptic curve. More interestingly we illustrate the above theorem by reproducing the following example of Goren (c.f. \cite{Goren}, Theorem 2).

\begin{example} Let \(F\) be a non-cyclic primitive quartic CM field and \(\Phi\) a CM type for \(F\). Then (c.f. \cite{ST}, II, 8.4) we have \(Gal( \widetilde{F} / \mathbb{Q}) \simeq D_8 = \langle x,y \; | \; x^2 = 1 = y^4, xyx = y^3 \rangle = \{1, y, y^2, y^3, x, xy, xy^2, xy^3\}\) and we may assume that \(F\) is the fixed field of the subgroup \(\{1,x\}\) and that \( \widetilde{\Phi} =  \{1,x, y, xy^3\} \). The maximal totally real subfield \(F^+\) is then fixed by \(\{1,x,y^2, xy^2\}\). Let \(A / E\) be an abelian variety with CM by \(F\) with type \(\Phi\) and let \(\mathcal{P}\) be a prime of \(E\) of good reduction for \(A\). We fix a prime \(\overline{\mathcal{P}}\) of \(\overline{\mathbb{Q}}\) above \(\mathcal{P}\) and let \( \widetilde{\mathfrak{P}}\) be the associated prime of \( \widetilde{F}\)  and \(\mathfrak{P}\) that of \(F\).

\begin{itemize}\item If \(D = \{1, xy\}\) then \(p\) splits in \(F\) as \(p\mathcal{O}_F= \mathfrak{P} \mathfrak{P}^c\). In the notation of the previous discussion these two primes correspond to the two double cosets \(DH = \{1, xy, x, y^3\}\) and \(Dy^2H = \{y^2, xy^3, y, xy^2\}\).  Then \(DH \cap  \widetilde{\Phi} = \{1,x\}\) and \(Dy^2H\cap  \widetilde{\Phi} = \{y, xy^3\}\). So we see that \(\mathcal{G} \sim \mathcal{G}_w \times \mathcal{G}_{w^c}\) where each component has height \(2\) and dimension \(1\). Hence \(A\) has supersingular reduction at \(\mathcal{P}\).  
\item If on the other hand \(D = \{1, xy^3\}\) then we still have \(p\) splitting as \(p\mathcal{O}_F = \mathfrak{P} \mathfrak{P}^c\). Now these two prime ideals correspond to the two double cosets \(DH = \{1, xy^3, x, y\}\) and \(Dy^2H = \{y^2, xy, xy^2, y^3\}\).  Here \(DH \cap  \widetilde{\Phi} = \{1,x, y, xy^3\}\) and \(Dy^2H\cap  \widetilde{\Phi} = \emptyset \) and so in this case  \(\mathcal{G} \sim \mathcal{G}_w \times \mathcal{G}_{w^c}\), but now \(\mathcal{G}_w\) has dimension \(2\) while \(\mathcal{G}_{w^c}\) has dimension \(0\). Hence \(A\) has ordinary reduction at \(\mathcal{P}\). 
\end{itemize}

\end{example}

\section{Results independent of the type} 

We saw in the previous section that if \(A/E\) is an abelian variety with complex multiplication by \(F\) then the formal isogeny type of the reduction of \(A\) at a prime \(\mathcal{P}\) of good reduction is completely determined by the interaction between the CM type of \(A\) and a suitable decomposition group in the Galois closure of \(F\). In this section we explore what can be said in general about the relationship between the decomposition of \(p\) in \(F\) and the formal isogeny type of the reduction of \(A\) at  a prime of good reduction, independent of the CM type. 
Let \(L\) denote the completion of \(E\) at the discrete valuation corresponding to \(\mathcal{P}\), and let \(\mathcal{G} / \mathcal{O}_L\) be the \(p\)-divisible group attached to \(A\). Then recall there is a decomposition 
\[ \mathcal{G} \sim \prod_{w|p} \mathcal{G}_w\]
of \(p\)-divisible groups up to isogeny, where \(w\) runs over the places of \(F\) above \(p\). The slopes of \(A_{\mathcal{P}}\) are the same as the slopes of the \(\mathcal{G}_w\), i.e the numbers \(\lambda_w = d_w/h_w\) appearing with multiplicity \(h_w\). 

We can better understand these slopes by considering the maximal totally real subfield \(F^+ \subset F\). 
Indeed the inclusion \(\iota :  \mathbb{Q}_p \otimes_{\mathbb{Q}} F^+ \rightarrow \mathbb{Q}_p \otimes_{\mathbb{Z}_p} \End(\mathcal{G}) \) also induces a decomposition
\[ \mathcal{G} \sim  \prod_{v|p} \mathcal{G}_v \]
where \(v\) runs over the places of \(F^+\) dividing \(p\). Here each \(\mathcal{G}_v\) is a \(p\)--divisible group equipped with an action of the local field \(F_v^+\) over the which the rational Tate module \(V\mathcal{G}_v\) is free of rank 2. Each \(\mathcal{G}_v\) further decomposes as 
\[ \mathcal{G}_v \sim \prod_{w|v} \mathcal{G}_w \]
recovering our earlier decomposition in terms of places \(w\) of \(F\).  
 The point is that the height \(h_v\) and dimension \(d_v\) of these groups are now independent of the \(\overline{\mathbb{Q}}_p\)-CM type \(\Phi\) (defined with respect to any embedding \(j: L \hookrightarrow \overline{\mathbb{Q}}_p\)). Indeed \(h_v = 2[F_v^+ : \mathbb{Q}_p] \) and since the embeddings of \(F\) into \(\overline{\mathbb{Q}}_p\) appearing in the \(\overline{\mathbb{Q}}_p\)-CM type all restrict to distinct embeddings of \(F^+\) we see that \(d_v = [F^+_v : \mathbb{Q}_p]\). In particular we see that the ratio \(d_v/h_v \) is equal to \(1/2\) for all \(v\). There are two cases to deal with: 

\begin{itemize}
\item If there is a unique place \(w\) of \(F\) above the place \(v\) of \(F^+\) then \(\mathcal{G}_v = \mathcal{G}_w\) and so we see \(\lambda_w = {1}/{2}\)
\item If we have two places \(w\) and \(w^c\) lying above \(v\) then we have a corresponding decomposition \(\mathcal{G}_v \sim\mathcal{G}_w \times \mathcal{G}_{w^c}\) into \(p\)-divisible groups of equal height \( [F^{+}_v : \mathbb{Q}_p] = [F_w : \mathbb{Q}_p] = [F_{w^c} : \mathbb{Q}_p] \).  It follows that the two slopes are related by \(\lambda_w = 1 - \lambda_{w^c}\) 
\end{itemize}

\subsection*{Supersingular reduction}

Recall that \(A_{\mathcal{P}}\) is supersingular if every slope of \(A_{\mathcal{P}}\) is equal to \(1/2\). Our methods enable us to generalise a result of Sugiyama (described in \S1 of this paper, c.f. \cite{Sugiyama} Theorem 1.1) to the case where \(p\) may be ramified in \(F\). 

\begin{lemma} Let \(A/E\) be an abelian variety with complex multiplication by  \(F\), and let \(\mathcal{P}\) be any prime of \(E\) above \(p\) at which \(A\) has good reduction. If all primes of \(F^{+}\) above \(p\) are inert or ramified in \(F\) then \(A\) has supersingular reduction at \({\mathcal{P}}\). 

\begin{proof} In this case there is a unique place \(w\) of \(F\) lying above any place \(v\) of \(F^+\) above \(p\). Consequently each \(\mathcal{G}_w = \mathcal{G}_v\) has slope \({1}/{2}\) and so \(A\) has supersingular reduction. 
\end{proof}
\end{lemma}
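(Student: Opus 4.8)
The plan is to argue place-by-place over \(F^+\), using the decomposition \(\mathcal{G} \sim \prod_{v \mid p} \mathcal{G}_v\) established in the preceding discussion together with the observation, made there, that each factor \(\mathcal{G}_v\) has slope exactly \(1/2\) independently of the CM type (because \(h_v = 2[F_v^+ : \mathbb{Q}_p]\) while \(d_v = [F_v^+ : \mathbb{Q}_p]\)). The whole point is that the hypothesis forces each \(\mathcal{G}_v\) to coincide with a single \(\mathcal{G}_w\), so that this slope is inherited verbatim by the \(p\)-divisible groups attached to the places of \(F\), whence by the Conclusion of \S2 it is inherited by \(A_{\mathcal{P}}\).

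First I would translate the arithmetic hypothesis into a statement about the fibre of \(\{ w \mid p \}\) over \(\{ v \mid p \}\). Since \(F / F^+\) is a quadratic extension, for any place \(v\) of \(F^+\) above \(p\) there are exactly three possibilities: \(v\) splits into two distinct places \(w, w^c\) of \(F\); \(v\) is inert, with a unique place \(w\) above it of residue degree \(2\); or \(v\) ramifies, again with a unique place \(w\) above it, now of ramification index \(2\). Thus the assumption that every such \(v\) is inert or ramified in \(F\) is precisely the statement that every \(v\) has a unique place \(w\) of \(F\) lying above it.

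With this reduction in hand the conclusion is immediate. For each place \(v\) of \(F^+\) above \(p\), let \(w\) be the unique place of \(F\) above \(v\); then the further decomposition \(\mathcal{G}_v \sim \prod_{w \mid v} \mathcal{G}_w\) has only a single factor, so \(\mathcal{G}_w = \mathcal{G}_v\) and hence \(\lambda_w = d_v / h_v = 1/2\). As \(v\) ranges over all places of \(F^+\) above \(p\) the corresponding \(w\) range over all places of \(F\) above \(p\), so every slope \(\lambda_w\) of \(A_{\mathcal{P}}\) equals \(1/2\); by definition \(A\) then has supersingular reduction at \(\mathcal{P}\).

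I do not expect a serious obstacle here: the lemma is essentially a bookkeeping consequence of the CM-type-independent computation \(d_v / h_v = 1/2\) carried out above, and the only genuine input is the elementary splitting dichotomy for the quadratic extension \(F / F^+\). The one point to state carefully is that ``inert or ramified'' really does exhaust the non-split behaviour, so that no place \(v\) of \(F^+\) contributes a pair \(w, w^c\) whose slopes \(\lambda_w = 1 - \lambda_{w^c}\) could move away from \(1/2\).
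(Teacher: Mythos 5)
Your proposal is correct and follows exactly the paper's argument: the hypothesis means each place \(v\) of \(F^+\) above \(p\) has a unique place \(w\) of \(F\) above it, so \(\mathcal{G}_w = \mathcal{G}_v\) inherits the CM-type-independent slope \(d_v/h_v = 1/2\), and every slope of \(A_{\mathcal{P}}\) is \(1/2\). You simply spell out the splitting dichotomy for the quadratic extension \(F/F^+\) more explicitly than the paper's terse proof does.
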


As we observed in Examples 1 and 2, even for abelian surfaces the converse to this result fails. Indeed we saw that \(A_{\mathcal{P}}\) can be supersingular and yet every prime of \(F^{+}\) above \(p\) can split in \(F\). However, we can at least prove the following partial result 

\begin{theorem} Let \(A/E\) be an abelian variety with complex multiplication by \(F\). Suppose there is a prime \(\mathcal{P}\) of \(E\) above \(p\) at which \(A\) has good supersingular reduction.  Then any prime \(\mathfrak{p}\) of \(F^{+}\) above \(p\) with \(e(\mathfrak{p} | p)f(\mathfrak{p} |p)\) odd must be either inert or ramified in \(F\). 
\begin{proof}  Let \(\mathfrak{p}\) be a prime of \(F^+\) above \(p\) which splits in \(F\). If \(v\) is the place of \(F^+\) associated to \(\mathfrak{p}\) then there are two distinct places \(w\) and \(w^c\) of \(F\) above \(v\). So we have a decomposition \(\mathcal{G}_v \sim\mathcal{G}_w \times \mathcal{G}_{w^c}\) and since \(A_{\mathcal{P}}\) is supersingular each component must have slope \(1/2\). But each component has height \(h_w = h_{w^c} = [F^+_v : \mathbb{Q}_p]\) and so in order for them to have slope \(1/2\) we need  \( [F^+_v : \mathbb{Q}_p] = e(\mathfrak{p} | p)f(\mathfrak{p} |p)\) to be even.  \end{proof} \end{theorem}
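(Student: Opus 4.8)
The plan is to argue by contraposition: I will take a prime \(\mathfrak{p}\) of \(F^{+}\) above \(p\) that \emph{splits} in \(F\) and show that the local degree \(e(\mathfrak{p} | p)f(\mathfrak{p} | p) = [F_v^+ : \mathbb{Q}_p]\) is forced to be even, where \(v\) is the place of \(F^+\) attached to \(\mathfrak{p}\). The whole assertion is thereby localised at \(v\), so that the slope calculus developed earlier in this section applies verbatim and no global input is needed.

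First I would record what splitting means locally. Since \([F : F^+] = 2\) and \(\mathfrak{p}\) splits, there are exactly two places \(w, w^c\) of \(F\) above \(v\), each with \(F_w = F_{w^c} = F_v^+\). Correspondingly the \(p\)-divisible group decomposes as \(\mathcal{G}_v \sim \mathcal{G}_w \times \mathcal{G}_{w^c}\), with both factors of height \(h_w = h_{w^c} = [F_v^+ : \mathbb{Q}_p] = e(\mathfrak{p} | p)f(\mathfrak{p} | p)\); each is a \(p\)-divisible group with complex multiplication by its local field and so carries a single slope \(d_w/h_w\). Next I would bring in the hypothesis that \(A_{\mathcal{P}}\) is supersingular, so that every slope equals \(1/2\); in particular \(d_w/h_w = 1/2\), i.e. \(h_w = 2 d_w\). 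The conclusion is then purely numerical: because the dimension \(d_w\) of a \(p\)-divisible group is a nonnegative integer, the relation \(h_w = 2 d_w\) forces \(h_w = e(\mathfrak{p} | p)f(\mathfrak{p} | p)\) to be even.

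I do not expect any serious obstacle, as the argument is short and uses nothing about the CM type or about \(A\) beyond supersingularity of its reduction. The only point demanding care is the bookkeeping in the middle step: one must check that splitting gives \(F_w = F_v^+\), so that each of the two local factors inherits exactly half of the height \(h_v = 2[F_v^+ : \mathbb{Q}_p]\) of \(\mathcal{G}_v\). Once the individual heights are pinned down correctly, the passage from slope \(1/2\) to even height via integrality of the dimension is immediate.
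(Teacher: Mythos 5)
Your proposal is correct and follows essentially the same route as the paper: split $\mathfrak{p}$ gives two places $w, w^c$, the decomposition $\mathcal{G}_v \sim \mathcal{G}_w \times \mathcal{G}_{w^c}$ with both factors of height $[F_v^+ : \mathbb{Q}_p]$, and supersingularity forces each slope $d_w/h_w$ to equal $1/2$, so integrality of $d_w$ makes the height even. The only difference is that you make explicit the final integrality step ($h_w = 2d_w$ with $d_w \in \mathbb{Z}_{\geq 0}$), which the paper leaves implicit.
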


In particular we deduce that the situation we noticed for abelian surfaces (where \(A\) can have supersingular reduction at a prime above \(p\) and yet every prime of \(F^{+}\) above \(p\) can split in \(F\)) cannot occur whenever the dimension of \(A\) is odd. 

\begin{corollary}Let \(A/E\) be an abelian variety of odd dimension \(g\) which has complex multiplication by  \(F\). Suppose there is a prime \(\mathcal{P}\) of \(E\) above \(p\) at which \(A\) has good supersingular reduction. Then there is at least one prime of \(F^{+}\) above \(p\) which is either inert or ramified in \(F\). 
\begin{proof} Since 
\[ g = [F^+ : \mathbb{Q} ] = \sum_{\mathfrak{p} |p}  e(\mathfrak{p} | p)f(\mathfrak{p} |p) \]
at least one of the terms \( e(\mathfrak{p} | p)f(\mathfrak{p} |p)\) must be odd if \(g\) is. We may then apply Theorem 4.2. 
\end{proof}
\end{corollary}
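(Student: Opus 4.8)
The plan is to deduce this directly from Theorem 4.2 by means of a parity argument on the fundamental degree identity, so that all the substantive work has already been carried out. The only arithmetic input beyond Theorem 4.2 that I would need is the standard fact that local degrees sum to the global degree: applied to the field $F^+$, which has degree $g$ over $\mathbb{Q}$, this reads
\[ g = [F^+ : \mathbb{Q}] = \sum_{\mathfrak{p} \mid p} e(\mathfrak{p} \mid p) f(\mathfrak{p} \mid p), \]
the sum running over primes $\mathfrak{p}$ of $F^+$ above $p$.

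The key step is then a one-line parity observation. Since $g$ is odd by hypothesis, the left-hand side of the identity is odd, and so it is impossible for every summand $e(\mathfrak{p}\mid p)f(\mathfrak{p}\mid p)$ on the right to be even; were they all even, their sum would be even, contradicting the oddness of $g$. Hence there must exist at least one prime $\mathfrak{p}$ of $F^+$ above $p$ for which the product $e(\mathfrak{p}\mid p)f(\mathfrak{p}\mid p)$ is odd. Finally I would feed this particular $\mathfrak{p}$ into Theorem 4.2: because $A$ has good supersingular reduction at $\mathcal{P}$ and $e(\mathfrak{p}\mid p)f(\mathfrak{p}\mid p)$ is odd, that theorem forces $\mathfrak{p}$ to be inert or ramified in $F$, which is precisely the assertion of the corollary.

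As for obstacles, I expect essentially none to remain at this stage. The genuine content lies entirely in Theorem 4.2, which converts the supersingularity hypothesis into the constraint that any \emph{split} prime of $F^+$ must have even local degree $[F_v^+:\mathbb{Q}_p]=e(\mathfrak{p}\mid p)f(\mathfrak{p}\mid p)$; the corollary is then a short combinatorial consequence. The only points requiring care are that the degree identity is applied to $F^+$ rather than to $F$, and that the supersingular hypothesis enters only through the cited theorem rather than being used again directly.
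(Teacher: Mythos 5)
Your proposal is correct and follows exactly the paper's own argument: the degree identity $g = [F^+:\mathbb{Q}] = \sum_{\mathfrak{p}\mid p} e(\mathfrak{p}\mid p)f(\mathfrak{p}\mid p)$, the parity observation that an odd sum forces some summand to be odd, and the application of Theorem 4.2 to that prime. There is nothing to add or correct.
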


In the case when \(F^+\) is moreover Galois over \(\mathbb{Q}\) we can completely classify supersingular reduction in terms of the decomposition of \(p\) in \(F\). 
\begin{corollary} Let \(A/E\) be an abelian variety of odd dimension \(g\) which has complex multiplication by  \(F\), and let \(\mathcal{P}\) be any prime of \(E\) above \(p\) at which \(A\) has good reduction.  Assume further that \(F^+\) is Galois over \(\mathbb{Q}\). Then \(A\) has supersingular reduction at \(\mathcal{P}\) if and only if every prime of \(F^{+}\) above \(p\) is either inert or ramified in \(F\). 
\begin{proof} In this case if \(\mathfrak{p}\) is a prime of \(F^+\) above \(p\) then we know that \(e(\mathfrak{p} | p)f(\mathfrak{p} |p)\) divides \(g =[F^+ :\mathbb{Q}]\) and so must be odd if \(g\) is. The result then follows from Theorem 4.2 and Lemma 4.1. 
\end{proof}
\end{corollary}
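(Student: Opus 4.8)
The plan is to deduce the corollary by combining the two results of this section, invoking the Galois hypothesis on \(F^+\) only to verify the numerical condition that appears in the hypothesis of Theorem 4.2. First I would dispose of the implication that is essentially Lemma 4.1: if every prime of \(F^+\) above \(p\) is inert or ramified in \(F\), then Lemma 4.1 applies verbatim and gives supersingular reduction, using neither the parity of \(g\) nor the assumption that \(F^+/\mathbb{Q}\) is Galois. So the substance of the corollary is the converse.

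For that direction, suppose \(A\) has supersingular reduction at \(\mathcal{P}\). By Theorem 4.2 it suffices to show that \emph{every} prime \(\mathfrak{p}\) of \(F^+\) above \(p\) has \(e(\mathfrak{p}|p)f(\mathfrak{p}|p)\) odd, for then each such prime is forced to be inert or ramified in \(F\). The key step is to bring in the hypothesis that \(F^+/\mathbb{Q}\) is Galois: in that case \(\Gal(F^+/\mathbb{Q})\) acts transitively on the primes above \(p\), so the ramification index and residue degree are the same for every prime \(\mathfrak{p}\) above \(p\). Writing \(r\) for the number of such primes, the fundamental identity gives \(efr = [F^+ : \mathbb{Q}] = g\), whence \(ef = g/r\) divides \(g\). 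Since \(g\) is odd, every divisor of \(g\) is odd, and so \(ef\) is odd for each \(\mathfrak{p}\), as required.

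I do not expect a genuine obstacle here, since the local work has already been carried out in Lemma 4.1 and Theorem 4.2, and the role of the Galois hypothesis is merely to upgrade the pointwise conclusion of Theorem 4.2 into a statement about all primes at once. The one point to handle with care is the claim that \(ef\) divides \(g\): this uses the full strength of the Galois assumption, through transitivity of the action on primes above \(p\), and would fail for a general totally real \(F^+\), where different primes can carry different products \(ef\), not all dividing \(g\). This is exactly the reason the unconditional Corollary 4.3 guarantees only a single prime that is inert or ramified, whereas the Galois hypothesis here yields the clean equivalence for \emph{all} primes of \(F^+\) above \(p\).
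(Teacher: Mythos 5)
Your proposal is correct and follows exactly the paper's argument: the Galois hypothesis makes $e(\mathfrak{p}|p)f(\mathfrak{p}|p)$ divide $g=[F^+:\mathbb{Q}]$ (via transitivity of the Galois action on primes above $p$), hence odd, so Theorem 4.2 forces every prime of $F^+$ above $p$ to be inert or ramified, while Lemma 4.1 gives the other direction. The only difference is expository — you spell out the transitivity argument that the paper leaves implicit in the phrase ``we know that $e(\mathfrak{p}|p)f(\mathfrak{p}|p)$ divides $g$.''
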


\subsection*{Ordinary reduction}
Recall \(A_{\mathcal{P}}\) is ordinary if every slope of \(A_{\mathcal{P}}\) is either \(0\) or \(1\). Sugiyama has also shown that if \(p\) splits completely in \(F\) then \(A\) has ordinary reduction at \(\mathcal{P}\) (\cite{Sugiyama}, Theorem 1.2).  
Even for abelian surfaces the converse to this result fails - indeed we saw that in this case we could have \(p\mathcal{O}_F = \mathfrak{P}\mathfrak{P}^c\) and \(A_{\mathcal{P}}\) can be ordinary. We can at least say

\begin{theorem}  Let \(A/E\) be an abelian variety with complex multiplication by \(F\). Suppose there is a prime \(\mathcal{P}\) of \(E\) above \(p\) at which \(A\) has good ordinary reduction. Then every prime of \(F^+\) above \(p\) must split in \(F\). 
\begin{proof} If not then there is a place \(v\) of \(F^+\) above \(p\) such that there is a unique place \(w\) of \(F\) above \(v\). In this case the corresponding \(p\)-divisible group \(\mathcal{G}_w = \mathcal{G}_v\) will have slope \(1/2\) and so \(A_{\mathcal{P}}\) cannot be ordinary. 
\end{proof}
\end{theorem}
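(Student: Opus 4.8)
The plan is to prove the contrapositive: I will show that if some prime $\mathfrak{p}$ of $F^{+}$ above $p$ fails to split in $F$ (so it is either inert or ramified), then $A_{\mathcal{P}}$ cannot be ordinary. Since by definition ordinariness requires every slope of $A_{\mathcal{P}}$ to be $0$ or $1$, it suffices to exhibit a single slope equal to $1/2$.

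First I would fix the place $v$ of $F^{+}$ corresponding to the non-split prime $\mathfrak{p}$ and pass to the decomposition $\mathcal{G} \sim \prod_{v|p} \mathcal{G}_v$ over the places of $F^{+}$ dividing $p$. Because $\mathfrak{p}$ is inert or ramified in $F$, there is a unique place $w$ of $F$ lying above $v$, so $\mathcal{G}_v$ admits no further splitting and we have $\mathcal{G}_w = \mathcal{G}_v$. Now I would invoke the type-independent computation of the height and dimension of $\mathcal{G}_v$ recorded at the start of this section: since $h_v = 2[F_v^{+} : \mathbb{Q}_p]$ and $d_v = [F_v^{+} : \mathbb{Q}_p]$, the slope of $\mathcal{G}_v$ is exactly $d_v/h_v = 1/2$. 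Consequently $\mathcal{G}_w = \mathcal{G}_v$ contributes the slope $\lambda_w = 1/2$ to the Newton polygon of $A_{\mathcal{P}}$, with positive multiplicity $h_w = [F_w : \mathbb{Q}_p] > 0$. As $1/2$ is neither $0$ nor $1$, the variety $A_{\mathcal{P}}$ is not ordinary, and taking the contrapositive yields the theorem.

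I do not expect any genuine technical obstacle here: the entire content has already been extracted in the preparatory discussion, namely that each local factor $\mathcal{G}_v$ has slope exactly $1/2$ independently of the CM type, and that a non-split place of $F^{+}$ forces $\mathcal{G}_v$ to remain a single unsplit factor of that slope. The only point worth emphasising is the contrast with the supersingular criterion of Theorem 4.2, where a parity condition on $e(\mathfrak{p}|p)f(\mathfrak{p}|p)$ was required: here no numerical condition intervenes, because the mere presence of one slope-$1/2$ factor, regardless of its multiplicity, already obstructs ordinariness.
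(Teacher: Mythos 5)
Your proposal is correct and follows exactly the paper's own argument: both proceed by contraposition, locating a non-split place $v$ of $F^{+}$, noting that then $\mathcal{G}_w = \mathcal{G}_v$ has slope $d_v/h_v = 1/2$ by the type-independent computation at the start of \S4, and concluding that $A_{\mathcal{P}}$ cannot be ordinary. The extra detail you supply (the explicit values $h_v = 2[F_v^{+}:\mathbb{Q}_p]$, $d_v = [F_v^{+}:\mathbb{Q}_p]$) is precisely the preparatory discussion the paper relies on implicitly.
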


\textsl{Acknowledgements}: This paper is the result of research carried out whilst studying for a PhD under the supervision of Tony Scholl, to whom I am grateful for suggesting the problem and for his guidance throughout. I would also like to thank James Newton for helpful comments on an earlier draft of this paper.

\end{document}